\title{Zero-dimensional metrizable CDH space \(X\) such that \(X^2\) is not CDH}
\author{Michal Hevessy\footnote{https://orcid.org/0009-0003-4192-2407}
\footnote{The work on this paper was supported by the Czech Science Foundation
grant 24-10705S}\\
Department of Mathematical Analysis\\
Faculty of Mathematics and Physics, Charles University\\
Prague, Czechia\\
E-mail: hevessy@karlin.mff.cuni.cz}
\date{}
\theoremstyle{plain}
\newtheorem{theorem}[]{Theorem}
\newtheorem{question}[]{Question}
\newtheorem*{Definition*}{Definition}
\newtheorem*{definition*}{Definition}
\newtheorem{lemma}[theorem]{Lemma}
\newtheorem*{theorem*}{Theorem}
\newtheorem*{Theorem*}{Theorem}
\newtheorem{corollary}[theorem]{Corollary}
\theoremstyle{definition}
\newtheorem{Definition}[]{Definition}
\newtheorem{definition}[Definition]{Definition}
\newtheorem*{Remarks*}{Remark}
\newtheorem*{remarks*}{Remark}
\newtheorem*{Example*}{Example}
\newtheorem*{example*}{Example}
\newtheorem*{acknowledgement}{Acknowledgement}
\newcommand{\R}{\mathbb{R}}
\newcommand{\N}{\mathbb{N}}
\newcommand{\Q}{\mathbb{Q}}
\newcommand{\closure}[1]{\mkern 1.5mu\overline{\mkern-1.5mu#1\mkern-1.5mu}\mkern 1.5mu}
\begin{document}

\maketitle

\begin{abstract}
        In this paper a construction of a metrizable zero-dimensional CDH space \(X\) such that \(X^2\) has exactly \(\mathfrak{c}\) countable dense subsets is provided. Furthermore, it is shown that the space can be constructed consistently co-analytic. Thus answering an open question asked by Medini. To do so we use the notion of \(\lambda\)-sets.
\end{abstract}

\renewcommand{\thefootnote}{}

\footnote{2020 \emph{Mathematics Subject Classification}: Primary 54G20, Secondary 54H05.}

\footnote{\emph{Key words and phrases}: countable dense homogeneous, \(\lambda\)-set.}

\section{Introduction}
    As is common in the literature for countable dense homogeneous spaces by a space we will always mean a separable metrizable topological space. Given a space \(X\) we will denote by \(\mathcal{H}(X)\) the space of homeomorphisms of \(X\). A space \(X\) is called countable dense homogeneous (CDH) if for every two \(A, B\) countable dense subsets of \(X\) there exists \(h \in \mathcal{H}(X)\) such that \(h(A) = B\). Some basic positive results about CDH spaces include that \(2^\omega\), \(\R^n\) for \(n \in \N\), \(\omega^\omega\) or \([0,1]^\omega\) are all CDH spaces. In fact the CDH property is very well understood for \(0\)-dimensional Polish spaces. Recall that a space is Polish if it is completely metrizable and \(0\)-dimensional if it has a base consisting of clopen sets. Let us denote \[\mathcal{C} = \{X; X \approx \kappa \oplus (\lambda \times 2^\omega) \oplus (\mu \times \omega^\omega) \text{, where } 0 \leq \kappa, \lambda, \mu \leq \omega\}.\] We have the following result due to Hrušák and Avillés.
    \begin{theorem}[\cite{Countable_dense_homogeneity_of_definable_spaces}]
    \label{CDH 0-dim space}
        Let \(X\) be a Polish \(0\)-dimensional CDH space. Then \(X \in \mathcal{C}\).
    \end{theorem}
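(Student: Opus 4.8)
The plan is to concentrate the whole force of the hypothesis in one lemma and then read off the classification from the standard topological characterizations of $2^\omega$ and $\omega^\omega$. The lemma I would prove first is: \emph{if $U\subseteq X$ is open and $h(U)=U$ for every $h\in\mathcal H(X)$, then $U$ is clopen.} To see this, suppose the boundary $\partial U=\closure U\setminus U$ is nonempty; it is closed and nowhere dense, so $X\setminus\partial U$ is a dense open subset of $X$. Fix $q\in\partial U$, let $B$ be a countable dense subset of $X\setminus\partial U$ (so $B$ is dense in $X$ and $q\notin B$), and put $A=B\cup\{q\}$. Then $A$ and $B$ are countable dense subsets of $X$, so by CDH there is $h\in\mathcal H(X)$ with $h(A)=B$. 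But $h(q)\in h(A)=B\subseteq X\setminus\partial U$, whereas $h(U)=U$ forces $h(\partial U)=\partial U$ and hence $h(q)\in\partial U$ --- a contradiction. Isolating this lemma is the main step; the rest is bookkeeping plus two classical uniqueness theorems.

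Granting the lemma (and dismissing the trivial case $X=\emptyset$), I would proceed as follows. The set $I$ of isolated points of $X$ is open and is carried onto itself by every self-homeomorphism, so by the lemma $I$ is clopen; being separable it is countable, and $X=\kappa\oplus X_1$ where $\kappa=|I|\le\omega$ and $X_1=X\setminus I$ is clopen, crowded (no isolated points), Polish and zero-dimensional. Next let $L$ be the set of points of $X$ having a compact neighbourhood; it is open, and by zero-dimensionality each of its points in fact has a compact \emph{clopen} neighbourhood. Since local compactness is a topological property, $L$ is carried onto itself by every self-homeomorphism, so by the lemma $L$ is clopen; thus $X=L\oplus N$ with both summands clopen and hence Polish. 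The summand $N$ is a zero-dimensional Polish space that is nowhere locally compact --- a compact neighbourhood inside the open set $N$ of one of its points would be a compact neighbourhood inside $X$, contradicting $N\cap L=\emptyset$ --- so by the Alexandrov--Urysohn characterization $N$ is empty or homeomorphic to $\omega^\omega$, contributing a summand $\mu\times\omega^\omega$ with $\mu\le\omega$.

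It remains to analyze $L$. As $L$ is locally compact, zero-dimensional and Lindel\"of, it is a disjoint union of at most $\omega$ nonempty compact clopen pieces. Deleting from $L$ its (clopen) set of isolated points, which is exactly $I$, leaves a clopen summand $L_1$ that is crowded and is a disjoint union of at most $\omega$ compact, crowded, zero-dimensional metric spaces; each of these is homeomorphic to $2^\omega$ by Brouwer's theorem, and a nonempty finite disjoint sum of Cantor sets is again a Cantor set, so $L_1\cong\lambda\times 2^\omega$ for some $\lambda\le\omega$. Assembling the pieces, $X=I\oplus L_1\oplus N\cong\kappa\oplus(\lambda\times 2^\omega)\oplus(\mu\times\omega^\omega)\in\mathcal C$. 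The one genuinely delicate point --- and precisely the reason the analogous statement fails without completeness --- is making $I$ and $L$ clopen, so that the three pieces are Polish in their own right; the lemma is what makes the CDH hypothesis deliver this, after which Alexandrov--Urysohn and Brouwer close the argument.
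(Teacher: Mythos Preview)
The paper does not prove this theorem; it is quoted as a result of Hru\v{s}\'ak and Avill\'es and cited to \cite{Countable_dense_homogeneity_of_definable_spaces}, so there is no in-paper argument to compare against.

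That said, your argument is correct and is essentially the natural route to the classification. The invariant-open-implies-clopen lemma is the heart of the matter, and your proof of it is clean: pushing a boundary point into a dense set avoiding the boundary contradicts $h(\partial U)=\partial U$. Applying the lemma to the set $I$ of isolated points and to the locally compact kernel $L$ makes all three pieces $I$, $L_1=L\setminus I$, $N=X\setminus L$ clopen and hence Polish, after which Brouwer's characterization of $2^\omega$ and the Alexandrov--Urysohn characterization of $\omega^\omega$ identify $L_1$ and $N$. Two minor cosmetic remarks: your argument in fact gives $\mu\in\{0,1\}$ (either $N=\emptyset$ or $N\cong\omega^\omega$) and $\lambda\in\{0,1,\omega\}$ (according as $L_1$ is empty, compact, or noncompact), which is of course enough to land in $\mathcal C$; and when you say $L_1$ is ``a disjoint union of at most $\omega$ compact, crowded, zero-dimensional metric spaces'', it is worth noting explicitly that crowdedness of each piece follows because the pieces are open in the crowded space $L_1$. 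Neither point is a gap.
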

    \begin{remarks*}
        This result in fact holds even for Borel \(0\)-dimensional CDH spaces \cite[Corollary 2.5]{Countable_dense_homogeneity_of_definable_spaces} and consistently for projective sets \cite[Corollary 2.7]{Countable_dense_homogeneity_of_definable_spaces}.
    \end{remarks*}
    Note that the class \(\mathcal{C}\) is closed under finite and even countable products. Thus the following questions asked by Medini \cite{Medini} and later repeated in a survey paper of Hrušák and van Mill \cite{Hrušák_van_Mill} are very natural.
    \begin{question}\cite[Problem 14]{Hrušák_van_Mill}
    \label{question_1}
        Is there a \(0\)-dimensional CDH space \(X\) such that \(X^2\) is not CDH?
    \end{question}
    \begin{question}\cite[Problem 28]{Hrušák_van_Mill}
    \label{question_2}
        For which cardinals \(\kappa\) is there a \(0\)-dimensional CDH space \(X\) such that \(X^2\) has exactly \(\kappa\) many types of countable dense subsets?
    \end{question}
    Or the following.
    \begin{question}\cite[Question 1.9]{Medini}
    \label{question_3}
    Can the space from \Cref{question_1} be consistently analytic or consistently co-analytic?
    \end{question}
Note that due to the remark after \Cref{CDH 0-dim space} we cannot get in ZFC the space answering \Cref{question_1} analytic or co-analytic.

We will construct a \(0\)-dimensional CDH space X, such that \(X^2\) has exactly \(\mathfrak{c}\) many types of countable dense subsets, which is consistently co-analytic. Thus answering \Cref{question_1} in the negative and partially answering \Cref{question_2} and \Cref{question_3}. In \cite{Medini} it has been shown that a space answering \Cref{question_1} exists under MA(\(\sigma\)-centered). However our construction works in ZFC and uses more elementary tools.

\section{Preliminary results}
In the construction we will use the notion of \(\lambda\)-sets. 
\begin{definition}
     A set \(X \subset 2^\omega\) is called a \(\lambda\)-set if every countable subset of \(X\) is \(G_\delta\) in \(X\), i.e. for any \(D \subset X\) countable \(D = \bigcap_{n=1}^{\infty}\left(G_n \bigcap X\right)\), where \(G_n\) are open in \(2^\omega\).
\end{definition}
\begin{remarks*}
    Note that being a \(\lambda\)-set is a hereditary property and that the space \(2^{\omega}\) is not a \(\lambda\)-set. Thus, no \(\lambda\)-set contains a copy of \(2^\omega\).
\end{remarks*}
By \Cref{CDH 0-dim space}, Polish zero-dimensional CDH-spaces behave very predictably. If we want any non-standard behavior we need to focus on non Polish zero-dimensional spaces. One such class of spaces are meager spaces and if we restrict our attention to those, it has been shown in \cite{Countable_dense_homogeneity_and_the_Baire_property} that the notion of a \(\lambda\)-set arises very naturally in the study of CDH-spaces.

In \cite{Countable_dense_homogeneity_and_lambda-sets} it has been shown that there are uncountable \(\lambda\)-sets that are CDH.
\begin{definition}
    \[\mathfrak{b} = \min(\{|F|; F \subset \omega^\omega \text{ and } \forall g \in \omega^\omega \, \exists f \in F \text{ such that } \{x \in \omega; g(x) \leq f(x)\} \text{ is infinite}\}).\]
\end{definition}
\begin{theorem}
[\cite{Countable_dense_homogeneity_and_lambda-sets}]
\label{lambda sets and CDH}
For any cardinal number \(\kappa \leq \mathfrak{b}\) there exists \(\lambda\)-set \(X\) such that \(X\) is CDH.
\end{theorem}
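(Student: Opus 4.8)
The plan is to construct $X$ as a dense subset of $2^{\omega}$ of cardinality $\kappa$ by transfinite recursion of length $\kappa$, writing $X=\bigcup_{\alpha<\kappa}X_{\alpha}$ with the $X_{\alpha}$ increasing and of size $<\kappa$, and to take care of two families of requirements: that $X$ be a $\lambda$-set, and that $X$ be CDH. The $\lambda$-set clause is the cheaper one, and it is exactly where the bound $\kappa\le\mathfrak b$ is used: if $D=\{d_{k}:k\in\omega\}\subseteq X$ is countable, then for $x\in X\setminus D$ let $f_{x}(k)$ be the least $n$ with $x\restriction n\neq d_{k}\restriction n$; if the family $\{f_{x}:x\in X\setminus D\}$ is $\le^{*}$-bounded by some $g\in\omega^{\omega}$, then $D=\bigcap_{j}U_{j}\cap X$ with $U_{j}=\bigcup_{k}[d_{k}\restriction(g(k)+j)]$, so $D$ is $G_{\delta}$ in $X$. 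This boundedness is automatic when $\kappa<\mathfrak b$ and must be engineered by the bookkeeping when $\kappa=\mathfrak b$; in either case the hypothesis $\kappa\le\mathfrak b$ is precisely what makes the relevant system of demands simultaneously satisfiable.

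The CDH clause is the real content. First reduce it to a single hub: fix a countable dense $Q\subseteq 2^{\omega}$ (say the eventually-zero sequences) with $Q\subseteq X_{0}$; it then suffices to arrange that every countable dense $A\subseteq X$ can be mapped onto $Q$ by some $h\in\mathcal H(X)$, since two such maps compose. Any such $A$ is also dense in $2^{\omega}$, so by the classical countable dense homogeneity of $2^{\omega}$ there is $\psi\in\mathcal H(2^{\omega})$ with $\psi[A]=Q$; the obstruction is that $\psi$ need not satisfy $\psi[X]=X$. Here the $\lambda$-set property is brought to bear: writing $A=G_{A}\cap X$ and $Q=G_{Q}\cap X$ for dense $G_{\delta}$ sets $G_{A},G_{Q}\subseteq 2^{\omega}$, with $G_{Q}$ chosen once and for all of a convenient ``standard'' type, it is enough to produce $\psi\in\mathcal H(2^{\omega})$ with $\psi[G_{A}]=G_{Q}$ and $\psi[X]=X$ simultaneously; working with these ambient $G_{\delta}$ witnesses, rather than with their countable traces, is what gives enough room for the genericity argument that follows. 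So the construction must guarantee: for every countable dense $A\subseteq X$, the homeomorphism carrying $G_{A}$ to $G_{Q}$ can be taken to fix $X$ setwise.

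Making this last point work is, I expect, the main obstacle, and it is at bottom a counting problem: $X$ may have as many as $\mathfrak c$ countable dense subsets, far more than the $\kappa$ stages of the recursion, so the homeomorphisms cannot be handled one at a time. The remedy is to build $X$ generically enough that the required homeomorphisms are available automatically. Concretely, one fixes in advance a rich but combinatorially restricted subgroup $\Phi$ of $\mathcal H(2^{\omega})$ — for example the homeomorphisms induced by automorphisms of the tree $2^{<\omega}$ together with finitely supported rearrangements of clopen pieces — keeps every $X_{\alpha}$ invariant under $\Phi$, and, whenever a new point $x_{\alpha}$ is added, chooses it outside a prescribed countable union of closed nowhere dense sets that codes all the finitary commitments made so far. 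A closure/fusion lemma then has to establish that this genericity forces both that every countable subset of the final $X$ is $G_{\delta}$ in $X$ — recovering the $\lambda$-set clause even at $\kappa=\mathfrak b$ — and that every dense $G_{\delta}$ code of a countable dense subset of $X$ is carried to $G_{Q}$ by a member of $\Phi$, whence $X$ is CDH. The role of $\mathfrak b$ throughout is to keep the dodging feasible cofinally: at each of the $\kappa$ stages the new point need avoid only countably many constraints, and, exactly as in the boundedness computation above, such a family of constraints can be met simultaneously precisely because $\kappa$ does not exceed the bounding number.
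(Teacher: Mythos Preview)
This theorem is not proved in the present paper; it is quoted from \cite{Countable_dense_homogeneity_and_lambda-sets} and used as a black box in the main construction, so there is no in-paper argument to compare your sketch against.

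Independently of that, the CDH half of your sketch has a structural gap. You propose to fix \emph{in advance} a subgroup $\Phi\le\mathcal{H}(2^{\omega})$ --- explicitly suggesting the group generated by tree automorphisms of $2^{<\omega}$ --- and to keep each approximation $X_\alpha$ invariant under $\Phi$ while adding one generic point $x_\alpha$ at a time. But the group of tree automorphisms already acts transitively on $2^\omega$, so its only invariant subsets are $\emptyset$ and $2^\omega$; closing a single new point under $\Phi$ yields all of $2^\omega$, and no nontrivial $\lambda$-set can be $\Phi$-invariant. More generally, any fixed $\Phi$ rich enough to carry an arbitrary dense $G_\delta$ onto the reference $G_Q$ will be transitive on points, and the same obstruction applies. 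The argument in the cited source does not draw the witnessing homeomorphisms from a pre-fixed ambient group; it constructs them alongside $X$ by a Knaster--Reichbach type back-and-forth, extending partial homeomorphisms of the countable stages through all later stages. Your worry that there are $\mathfrak{c}$ countable dense subsets but only $\kappa$ stages is handled there not by genericity but by the observation that, with countable increasing stages and $\kappa$ of uncountable cofinality (in particular for the case $\kappa=\aleph_1$ actually used in this paper), every countable $A\subseteq X$ already lies in some $X_\alpha$, so a bookkeeping over countable dense subsets of the $X_\alpha$'s suffices. Your treatment of the $\lambda$-set clause via $\le^{*}$-bounding is, by contrast, the right picture.
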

\begin{remarks*}
    Note that we have \( \aleph_1 \leq \mathfrak{b} \leq \mathfrak{c}\). More details regarding \(\mathfrak{b}\) can be found in \cite{Set_theory}.
\end{remarks*}

As we we want to show that the second power of the constructed space is not CDH we will need some tools for finding many distinct countable dense sets.

The following has been used in \cite{Nearly_countable_dense_homogeneous_spaces} for studying spaces with more types of countable dense subsets. We will use it to show that the space we construct has many types of countable dense subsets. 
\begin{lemma}
[\cite{Homogeneity_and_generalization_of_2-point_sets}]
\label{Q has many nowhere dense subsets}
    There exist \(\mathfrak{c}\) many nonhomeomorphic nowhere dense subsets of \(\Q\).
\end{lemma}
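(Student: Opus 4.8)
The plan is to separate the statement into two largely independent pieces and combine them: (A) every countable metrizable space is homeomorphic to a nowhere dense subspace of $\mathbb{Q}$, and (B) there exist $\mathfrak{c}$ pairwise non-homeomorphic countable metrizable spaces. Granting both, pick for each $x\in 2^\omega$ a witness $Z_x$ of (B) and, using (A), a homeomorphic copy $A_x\subseteq\mathbb{Q}$ of $Z_x$ that is nowhere dense; the $A_x$ are then $\mathfrak{c}$ pairwise non-homeomorphic nowhere dense subsets of $\mathbb{Q}$, and since there are at most $\mathfrak{c}$ subsets of $\mathbb{Q}$ at all, this is exactly the lemma.

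For (A): a countable metrizable space $Y$ is zero-dimensional, hence embeds into $2^\omega$; fixing a homeomorphism $2^\omega\times 2^\omega\approx 2^\omega$ we may instead realize $Y$ as a subspace $Y'$ of the closed nowhere dense set $2^\omega\times\{\mathbf 0\}$. Choose a countable $D\subseteq(2^\omega\times 2^\omega)\setminus(2^\omega\times\{\mathbf 0\})$ dense in $2^\omega\times 2^\omega$ (the complement of a closed nowhere dense set is dense open) and put $Z=Y'\cup D$. Then $Z$ is countable and metrizable, and has no isolated points: since $2^\omega\times 2^\omega$ is perfect and $D$ is dense, every point of $Z$ is a limit of $D$. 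Hence $Z\approx\mathbb{Q}$, while $Y'=(2^\omega\times\{\mathbf 0\})\cap Z$ is closed in $Z$ with empty interior in $Z$ (every neighbourhood of a point of $Y'$ meets $D$), so $Y\approx Y'$ is nowhere dense in $Z\approx\mathbb{Q}$. This step is routine.

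For (B) I would code reals into the way a scattered ``decoration'' is attached to a copy of $\mathbb{Q}$. Fix a copy $P$ of $\mathbb{Q}$ together with an infinite closed discrete set $D=\{d_n:n\in\omega\}\subseteq P$. Given $r\colon\omega\to\{1,2,3,\dots\}$, let $Z_r$ be obtained from $P$ by attaching at each $d_n$ a copy of the compact countable scattered space $\omega^{r(n)}+1$, its top point identified with $d_n$, the attached scattered parts placed pairwise disjointly, each converging to its own $d_n$ and accumulating at no other point of $P$ (this can be arranged keeping the space countable and metrizable). Then the perfect kernel of $Z_r$ is exactly $P$; inside $Z_r$ the set $D$ is recovered as the set of points of the perfect kernel that are limits of isolated points of $Z_r$; and the gadget attached at $d_n$ is recovered, up to homeomorphism, from the topology of $Z_r$ on a small clopen neighbourhood of $d_n$ meeting $D$ only in $d_n$, and is homeomorphic to $\omega^{r(n)}+1$. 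Using that $\mathbb{Q}$ with a distinguished infinite closed discrete subset is homogeneous — any permutation of that subset extends to a self-homeomorphism of $\mathbb{Q}$ — one shows $Z_r\approx Z_{r'}$ iff $r$ and $r'$ have the same multiset of values. Since there are $\mathfrak{c}$ many such multisets — for $x\in 2^\omega$ let $r$ assume each odd value infinitely often and take the value $2k$ at exactly one $d_n$ precisely when $x(k)=1$ — this yields (B).

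The routine parts are (A) and the Cantor--Bendixson bookkeeping locating the perfect kernel, the set $D$, and the gadgets inside $Z_r$. The real obstacle is the rigidity half of (B): showing an abstract homeomorphism $h\colon Z_r\to Z_{r'}$ must carry the perfect kernel onto the perfect kernel, $D$ onto $D$, and each gadget onto a gadget of the same Cantor--Bendixson rank — so that the multiset of ranks is a homeomorphism invariant. Here the homogeneity of $\mathbb{Q}$ and a careful choice of how the gadgets are glued do the work; a careless construction collapses to only countably many homeomorphism types, since countable metrizable spaces freely absorb and re-amalgamate disjoint pieces — e.g. $\bigsqcup_{n\in\omega}(\omega+1)\approx\bigl(\bigsqcup_{n\in\omega}(\omega+1)\bigr)\sqcup(\omega+1)$, and $\omega^\alpha+1$ contains a clopen copy of $\omega^\beta+1$ for every $\beta\le\alpha$.
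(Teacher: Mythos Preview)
The paper does not prove this lemma; it is quoted from \cite{Homogeneity_and_generalization_of_2-point_sets} without argument, so there is no ``paper's proof'' to compare against. Your proposal is an independent, essentially correct proof. Part (A) is standard and your sketch is fine. For part (B) your construction works: the perfect kernel of \(Z_r\) is indeed \(P\); the set \(D\) is recovered as the points of the perfect kernel that lie in the closure of the isolated points; and intersecting a small clopen neighbourhood of \(d_n\) with the complement of the perfect kernel (then adjoining \(d_n\)) recovers a copy of \(\omega^{r(n)}+1\), so the multiset of values of \(r\) is a homeomorphism invariant of \(Z_r\). The only places needing care are the ones you already flag --- making the gluing explicit so that \(P\) is closed in \(Z_r\), the gadgets are open off their top points, and each gadget accumulates only at its own \(d_n\); realising \(Z_r\) inside \(\R^2\) with \(P\) on the \(x\)-axis and each gadget in a sufficiently small vertical strip over \(d_n\) does this. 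Your homogeneity claim for \((\Q,D)\) is true (choose disjoint clopen neighbourhoods of the \(d_n\), each homeomorphic to \(\Q\), and permute them), but since you only need the implication ``different multiset \(\Rightarrow\) non-homeomorphic'' for the lower bound \(\mathfrak{c}\), it is not strictly required.
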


In \cite{Countable_dense_homogeneity_and_the_double_arrow_space} the relation of products and CDH spaces has been studied. The following can be seen as an obstruction for product spaces to be CDH.
\begin{theorem}
[\cite{Countable_dense_homogeneity_and_the_double_arrow_space}]
\label{CDH spaces and copies of cantor}
    Let \(X, Y\) be two spaces. If \(X \times Y\) is CDH, then \(X\) contains a subspace homeomorphic to \(2^{\omega}\) if and only if \(Y\) contains a subspace homeomorphic to \(2^{\omega}\).
\end{theorem}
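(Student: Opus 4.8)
Suppose towards a contradiction that \(X\times Y\) is CDH while exactly one of the factors, say \(X\), contains a copy of \(2^\omega\); by the symmetry between \(X\) and \(Y\) this is the only case to rule out. Write \(Z:=X\times Y\), and assume, as usual in this context, that \(X\) and \(Y\) are crowded. Since \(Y\) is separable metrizable and contains no copy of \(2^\omega\), every compact subspace of \(Y\) is countable: an uncountable compact metrizable space has a non-empty perfect kernel, which contains a copy of \(2^\omega\). The key structural consequence — and, I expect, the real heart of the matter — is this: for every copy \(C\approx 2^\omega\) in \(Z\) (compact, hence closed in \(Z\)), the image \(\pi_Y(C)\) is a countable compact subset of \(Y\), so \(C=\bigcup_{y\in\pi_Y(C)}\bigl(C\cap(X\times\{y\})\bigr)\) is a countable union of closed subsets of \(C\); as \(C\approx 2^\omega\) is a Baire space, one of these has non-empty interior in \(C\) and hence contains a non-empty subset \(C'\) that is clopen in \(C\) — so \(C'\approx 2^\omega\) — and lies in a single ``slice'' \(X\times\{y^\ast\}\). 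In other words, total imperfectness of \(Y\) confines every copy of the Cantor set inside \(Z\) to countably many slices.

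Using this I would exhibit two countable dense subsets of \(Z\) that no homeomorphism of \(Z\) can interchange, contradicting CDH. Fix \(K\subseteq X\) with \(K\approx 2^\omega\) and any \(y_0\in Y\), and put \(C_0:=K\times\{y_0\}\approx 2^\omega\). Let \(D_0\) be the union of a countable dense subset of \(C_0\) and a countable dense subset of \(Z\); then \(D_0\) is countable dense in \(Z\) and \(D_0\cap C_0\) is dense in \(C_0\). For \(D_1\), enumerate a countable base \(\{U_k\times V_k:k\in\omega\}\) of \(Z\) with all \(U_k\) open in \(X\) and \(V_k\) open in \(Y\) non-empty; since \(Y\) is crowded every \(V_k\) is infinite, so one can choose pairwise distinct \(y_k\in V_k\) and arbitrary \(x_k\in U_k\), and set \(D_1:=\{(x_k,y_k):k\in\omega\}\). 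Then \(D_1\) is countable dense in \(Z\) and \(|D_1\cap(X\times\{y\})|\le 1\) for every \(y\in Y\).

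Finally, ``there is a copy \(C\approx 2^\omega\) in \(Z\) with \(D\cap C\) dense in \(C\)'' is invariant under homeomorphisms of \(Z\), since a homeomorphism carries such a pair \((C,D)\) to a pair of the same kind. The set \(D_0\) has this property (via \(C_0\)), while \(D_1\) does not: for any copy \(C\approx 2^\omega\) in \(Z\) pass to the clopen sub-copy \(C'\subseteq C\) contained in one slice, as above; then \(D_1\cap C'\) has at most one point, hence is not dense in the infinite crowded space \(C'\), and therefore \(D_1\cap C\) is not dense in \(C\) (otherwise its trace on the open set \(C'\) of \(C\) would be dense there). So no homeomorphism of \(Z\) sends \(D_0\) to \(D_1\), and \(Z\) is not CDH. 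The step I expect to require genuine care is the slice-confinement observation; producing \(D_1\) is then routine, once one runs through a countable box-base and spends crowdedness of \(Y\) to keep the second coordinates distinct.
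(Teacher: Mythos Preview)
The paper does not actually prove this theorem: it is quoted from \cite{Countable_dense_homogeneity_and_the_double_arrow_space}, and the paper only records (as \Cref{special subsets of products}) a corollary extracted from the proof in that reference. So there is no ``paper's own proof'' to compare against here.

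That said, your argument is correct under the crowdedness hypothesis you add (and some such hypothesis is necessary: with \(X=2^\omega\) and \(Y\) a two-point space the stated conclusion fails). The slice-confinement step is exactly right: \(\pi_Y\) of a Cantor set in \(Z\) is compact in \(Y\), hence countable by Cantor--Bendixson and the total imperfectness of \(Y\), and then Baire category in \(2^\omega\) produces a clopen sub-Cantor-set inside a single slice \(X\times\{y^\ast\}\). Your invariant ``some Cantor set in \(Z\) has \(D\) tracing densely on it'' cleanly separates \(D_0\) from \(D_1\).

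It is worth noting that your construction of \(D_1\) does more than kill CDH: because any \(E\subseteq D_1\) with \(\closure{E}\approx 2^\omega\) would, via the slice-confinement step, force \(D_1\) to trace densely on a clopen sub-Cantor-set lying in one slice---impossible since \(|D_1\cap(X\times\{y\})|\le 1\)---your \(D_1\) is precisely the ``special'' countable dense set promised by \Cref{special subsets of products}. In other words, you have in fact reproduced the content the paper imports from \cite{Countable_dense_homogeneity_and_the_double_arrow_space}, and your write-up could serve as a self-contained proof of both the theorem and its corollary.
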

However, we will need a finer tool as we not only want to show that the second power is not CDH but also that it has exactly \(\mathfrak{c}\) many types of countable dense sets. We will actually use the following which follows from the proof of \Cref{CDH spaces and copies of cantor} in \cite{Countable_dense_homogeneity_and_the_double_arrow_space}.
\begin{corollary}
\label{special subsets of products}
    Let \(X, Y\) be spaces such that \(X\) contains a copy of \(2^\omega\) and \(Y\) does not contain a copy of \(2^\omega\). Then there exists countable dense subset \(C\) of \(X \times Y\) such that for no \(D \subset C\) we have \(\closure{D} \approx 2^\omega\).
\end{corollary}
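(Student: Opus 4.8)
The plan is to combine one structural fact about $Y$ with a careful choice of the dense set. Since $Y$ contains no copy of $2^\omega$, every compact subspace of $Y$ is countable: by Cantor--Bendixson an uncountable compact metrizable space has a nonempty perfect kernel, and a nonempty perfect compact metrizable space contains a copy of $2^\omega$, so an uncountable compact $K\subseteq Y$ would embed $2^\omega$ into $Y$. Hence, whenever $K\subseteq X\times Y$ is compact, its projection $\pi_Y(K)$ onto $Y$ is a countable compact metrizable space, therefore \emph{scattered}, and in particular, if nonempty, it has an isolated point.

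Next I would build $C$ so that its second coordinates are pairwise distinct. Fix a countable base of $X\times Y$ consisting of nonempty sets $U_n\times V_n$ ($U_n$ open in $X$, $V_n$ open in $Y$), and choose recursively $(x_n,y_n)\in U_n\times V_n$ with $y_n\notin\{y_0,\dots,y_{n-1}\}$; this is possible because every nonempty open subset of $Y$ is infinite. Set $C=\{(x_n,y_n):n\in\omega\}$. Then $C$ is dense in $X\times Y$ and meets each horizontal slice $X\times\{y\}$ in at most one point. Now suppose toward a contradiction that $D\subseteq C$ with $K:=\closure{D}\approx 2^\omega$; then $K$ is nonempty and compact, so $\pi_Y(K)$ has an isolated point $y^{\ast}$. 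Since $\{y^{\ast}\}$ is open in $\pi_Y(K)$ and $\pi_Y$ is continuous, $W:=K\cap(X\times\{y^{\ast}\})$ is open in $K$; it is also closed in $K$ (as $\{y^{\ast}\}$ is closed in $Y$) and nonempty, so it is a nonempty clopen subset of $K\approx 2^\omega$ and hence $W\approx 2^\omega$, in particular $W$ is infinite. But $W$ is open in $K=\closure{D}$, so $D\cap W$ is dense in $W$ and therefore infinite, while $D\cap W\subseteq C\cap(X\times\{y^{\ast}\})$ has at most one element --- a contradiction. Thus no $D\subseteq C$ has $\closure{D}\approx 2^\omega$, and $C$ is as required.

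The substantive step is the reduction to a single horizontal slice, carried out above via ``countable compact metrizable $\Rightarrow$ scattered'' (equivalently one could apply the Baire category theorem to $K\approx 2^\omega$ and its closed cover by the slices $K\cap(X\times\{y\})$, $y\in\pi_Y(K)$). The hypothesis that $X$ contains a copy of $2^\omega$ plays no role in constructing $C$; it only ensures that $X\times Y$ itself contains a copy of $2^\omega$, so that the conclusion is not vacuous. The one point I would be careful about is the claim that every nonempty open subset of $Y$ is infinite: this holds as soon as $Y$ has no isolated points, which is the case in our application where $Y$ will be a crowded $\lambda$-set; in general one would treat the clopen set of isolated points of $Y$ separately.
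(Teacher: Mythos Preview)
Your argument is correct and self-contained for the case where $Y$ has no isolated points, which is exactly what the paper needs (in the application $Y$ is a crowded $\lambda$-set). The paper itself does not supply a proof of this corollary; it simply records that the statement can be read off from the proof of the cited result in \cite{Countable_dense_homogeneity_and_the_double_arrow_space}. Your route via Cantor--Bendixson on $\pi_Y(\closure{D})$ together with the ``distinct second coordinates'' construction is the natural way to extract it, so there is nothing substantive to compare.

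One correction to your closing remark: the set of isolated points of $Y$ is open but need not be closed, and more to the point the corollary is actually \emph{false} in the stated generality once $Y$ has an isolated point. Take $X=2^\omega$ and $Y$ a single point; then $X\times Y\approx 2^\omega$, and for every countable dense $C$ one has $\closure{C}=2^\omega$, so no admissible $C$ exists. Hence the isolated points cannot be ``treated separately'' --- the hypothesis that $Y$ be crowded is genuinely required, and your instinct to flag it was right; just strengthen the caveat from ``needs a separate argument'' to ``needs an extra hypothesis''.
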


When dealing with the descriptive quality of the constructed space, the following will be needed.
\begin{theorem}
[\cite{Descriptive_set_theory_and_forcing}]
\label{small spaces are consistently co-analytic}
    Suppose MA \(+\) \(\neg\)CH \(+\) \(\omega_1 = (\omega_1)^{\text{L}}\). Then every \(A \subset 2^\omega\) of cardinality \(\aleph_1\) is \(\boldsymbol{\Pi}_1^{1}\).
\end{theorem}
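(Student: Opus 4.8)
\emph{Proof idea.}
Fix an enumeration $A=\{a_\xi:\xi<\omega_1\}$ (legitimate, since MA $+\,\neg$CH gives $\aleph_1<\mathfrak{c}$). The target is to write $A=f^{-1}(\mathrm{WO})$ for a Borel map $f$ with a single real parameter $b$, where $\mathrm{WO}$ is the coanalytic set of reals coding well orders of $\omega$; this is exactly the statement $A\in\boldsymbol{\Pi}_1^{1}$. The plan splits into an \emph{easy} descriptive half and a \emph{hard} combinatorial half. The easy half shows that $A$ is $\boldsymbol{\Pi}_1^{1}$ as soon as $b$ is a real with two properties: (i) the enumeration $\vec a:=\langle a_\xi:\xi<\omega_1\rangle$ is decodable inside $L[b]$ by a formula absolute between $V$ and $L[b]$; and (ii) $A$ is contained in the largest thin $\boldsymbol{\Pi}_1^{1}(b)$ set --- equivalently, every $a_\xi$ shows up in the constructible-from-$b$ hierarchy below the first $(a_\xi\oplus b)$-admissible ordinal. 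The hard half produces such a $b$ from $A$ by one application of MA.

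For the easy half: granting (i) and (ii), for any real $y$ one has $y\in A$ iff there is a countable well order $w$ of some ordinal $<\omega_1^{y\oplus b}$ such that $L_{|w|}[b]$ already thinks ``$y\in\mathrm{ran}(\vec a)$''. Indeed, if $y=a_\xi$ then (ii) puts a suitable $w$ among the objects hyperarithmetic in $y\oplus b$, while conversely $\Sigma_1$-upward absoluteness of ``$y\in\mathrm{ran}(\vec a)$'' carries such a witness back up to $L[b]$. As the quantifier over $w$ runs only over reals hyperarithmetic in $y\oplus b$, this is a $\Sigma_1$ statement over the least admissible set containing $y\oplus b$, hence $\boldsymbol{\Pi}_1^{1}$ in $b$ by the Spector--Gandy theorem; equivalently it turns into the wellfoundedness of a tree recursive in $y\oplus b$, giving the Borel reduction to $\mathrm{WO}$. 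The hypothesis $\omega_1=\omega_1^L$ enters through this half: it forces $\omega_1^L\le\omega_1^{L[b]}\le\omega_1$ to collapse to $\omega_1^{L[b]}=\omega_1$ for every real $b$, so that $L[b]$ computes $\omega_1$ correctly, its constructible hierarchy exhausts $2^\omega\cap L[b]$ along the countable ordinals, and the largest thin $\boldsymbol{\Pi}_1^{1}(b)$ set genuinely has size $\aleph_1$ --- which is also what leaves room for an $\aleph_1$-sized $A$ inside it.

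For the hard half one runs a coding forcing. From the canonical $L$-almost disjoint family $\langle d_\alpha:\alpha<\omega_1\rangle$ on $\omega$ and a fixed $L$-bijection $\omega_1\times\omega\cong\omega_1$ one codes $\vec a$ as a subset $S$ of $\omega_1$ and forces with the ccc (indeed $\sigma$-centered) almost disjoint coding poset for $S$; its generic real delivers (i). The substance is to augment this poset with $\aleph_1$ further genericity requirements --- morally one per $\xi<\omega_1$ --- that force the decoded $a_\xi$ to surface in $L[b]$ only at a level that is $(a_\xi\oplus b)$-admissible, thereby securing (ii). Since MA $+\,\neg$CH hands us a filter meeting any $\aleph_1<\mathfrak{c}$ dense subsets of a ccc poset, such a $b$ exists provided these requirements are expressible by dense sets and are compatible with ccc-ness and with the absoluteness of the decoding; verifying this --- in effect, that one can always extend a condition so as to push up the relevant admissible ordinal without breaking the coding --- is where the argument really lives, and is the step I expect to be the main obstacle. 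The remaining ingredients (ccc-ness and the dense-set count for the coding poset, $\Sigma_1$-absoluteness of almost disjoint decoding, and the Spector--Gandy reduction) are routine. I would also record why $\neg$CH is essential to the conclusion and not just to the method: under $\neg$CH an $\aleph_1$-sized set has no perfect subset, so by Mansfield--Solovay it already lies inside some $L[b]$, and the content of the theorem is that MA provides exactly the extra leverage needed to upgrade this containment, via a well-chosen $b$, into honest $\boldsymbol{\Pi}_1^{1}$-definability.
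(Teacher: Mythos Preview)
The paper does not prove this theorem at all: it is quoted from Miller's \emph{Descriptive Set Theory and Forcing} and invoked purely as a black box in the main construction, so there is no in-paper argument to compare your proposal against.

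On its own merits your outline is the standard Martin--Solovay/Miller argument and is essentially sound: $\sigma$-centered almost-disjoint coding over the canonical $L$-family to manufacture the parameter $b$, MA to meet the $\aleph_1$ required dense sets, and Spector--Gandy to convert the bounded existential ``$\exists\,\xi<\omega_1^{x\oplus b}\,(\dots)$'' into a genuine $\boldsymbol{\Pi}_1^1(b)$ condition on $x$. Your condition~(ii) is precisely the hypothesis Spector--Gandy needs, and you are right that arranging it is where the substantive work sits. One wording slip: in the hard half you say the coding should make $a_\xi$ ``surface only at a level that is $(a_\xi\oplus b)$-admissible'', whereas what you want (and what your own~(ii) already states correctly) is that it surfaces strictly \emph{below} $\omega_1^{a_\xi\oplus b}$. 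Your subsequent phrase about ``pushing up the relevant admissible ordinal'' shows you have the right picture --- one enlarges $\omega_1^{a_\xi\oplus b}$ by arranging that $a_\xi\oplus b$ computes a code for $\xi$ --- so this is a lapse of phrasing rather than of understanding.
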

More details regarding the set theoretic axioms can be found in \cite{Set_theory}.

\section{Main results}
We have all the tools needed and can proceed to the construction.
\begin{theorem}
    There exists a \(0\)-dimensional CDH space \(X\) with the following properties:
    \begin{itemize}
        \item \(X^2\) has exactly \(\mathfrak{c}\) many types of countable dense subsets.
        \item If MA + \(\neg\)CH + \(\omega_1 = (\omega_1)^{\text{L}}\) holds, then \(X\) is \(\boldsymbol{\Pi}_1^{1}\).
    \end{itemize}
\end{theorem}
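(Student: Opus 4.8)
The plan is to take \(X = 2^{\omega}\oplus\Lambda\), where \(\Lambda\) is a countable dense homogeneous \(\lambda\)-set of size \(\aleph_{1}\) supplied by \Cref{lambda sets and CDH} (note \(\aleph_{1}\le\mathfrak b\) always). Passing to a homeomorphic copy I may assume \(\Lambda\) is crowded and dense in \(2^{\omega}\); since a \(\lambda\)-set contains no copy of \(2^{\omega}\), this forces \(\Lambda\) to have no nonempty compact clopen subset, i.e. to be nowhere compact. The resulting \(X\) is separable, metrizable, zero-dimensional, of size \(\mathfrak c\). To see that \(X\) is CDH, observe that since \(2^{\omega}\) is compact and \(\Lambda\) is nowhere compact, \(2^{\omega}\) is exactly the union of all compact clopen subsets of \(X\); hence every self-homeomorphism of \(X\) carries \(2^{\omega}\) onto \(2^{\omega}\) and \(\Lambda\) onto \(\Lambda\). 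Given countable dense \(A,B\subseteq X\), the traces on the two clopen pieces are countable dense there, so the countable dense homogeneity of \(2^{\omega}\) together with that of \(\Lambda\) (\Cref{lambda sets and CDH}) yields two homeomorphisms which glue to the required \(h\in\mathcal H(X)\).

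Next consider \(X^{2}=(2^{\omega})^{2}\oplus(2^{\omega}\times\Lambda)\oplus(\Lambda\times 2^{\omega})\oplus\Lambda^{2}\). The upper bound is immediate: \(|X^{2}|=\mathfrak c\), so \(X^{2}\) has at most \(\mathfrak c^{\aleph_{0}}=\mathfrak c\) countable subsets, hence at most \(\mathfrak c\) types. For the lower bound, first note that \(\Lambda^{2}\) contains no copy of \(2^{\omega}\) (its coordinate projections are compact, hence countable, subsets of the \(\lambda\)-set \(\Lambda\)), while every nonempty clopen subset of each of the other three summands contains a copy of \(2^{\omega}\); being a topological property, this forces every \(h\in\mathcal H(X^{2})\) to preserve the clopen decomposition \(X^{2}=B\oplus\Lambda^{2}\) with \(B=(2^{\omega})^{2}\oplus(2^{\omega}\times\Lambda)\oplus(\Lambda\times 2^{\omega})\), and inside \(B\) to preserve the clopen set \((2^{\omega})^{2}\) (the union of the compact clopen subsets of \(B\)). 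Now fix, by \Cref{Q has many nowhere dense subsets}, pairwise non-homeomorphic nowhere dense sets \(\{N_{\alpha}:\alpha<\mathfrak c\}\subseteq\Q\), a copy \(K_{0}=2^{\omega}\times\{t_{0}\}\) of \(2^{\omega}\) inside \(2^{\omega}\times\Lambda\), and an embedding of \(\Q\) into \(K_{0}\). For each \(\alpha\) I would build a countable dense \(C_{\alpha}\subseteq X^{2}\) by: using \Cref{special subsets of products} (up to the obvious homeomorphisms) to choose on \(\Lambda\times 2^{\omega}\) and on \((2^{\omega}\times\Lambda)\setminus K_{0}\cong 2^{\omega}\times(\Lambda\setminus\{t_{0}\})\) countable dense sets no subset of which has a Cantor set as closure; placing on \(K_{0}\) exactly the image of \(N_{\alpha}\); and placing arbitrary countable dense sets on \((2^{\omega})^{2}\) and on \(\Lambda^{2}\). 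The role of \Cref{special subsets of products} is that then every countable \(D\subseteq C_{\alpha}\) with \(\overline D\approx 2^{\omega}\) has its closure inside \((2^{\omega})^{2}\cup K_{0}\); a Baire-category analysis should then exhibit a homeomorphism-invariant countable subspace of the pair \((X^{2},C_{\alpha})\) that determines \(N_{\alpha}\) up to homeomorphism, so that \(N_{\alpha}\not\approx N_{\beta}\) gives \(C_{\alpha}\not\sim C_{\beta}\). This makes \(X^{2}\) have exactly \(\mathfrak c\) types of countable dense subsets (in particular not CDH), answering \Cref{question_1} negatively.

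For the descriptive statement, realize \(X\cong(\{0\}\times 2^{\omega})\cup(\{1\}\times\Lambda)\subseteq 2\times 2^{\omega}\). The first set is closed, hence Borel; the second has cardinality \(\aleph_{1}\), so assuming MA \(+\) \(\neg\)CH \(+\) \(\omega_{1}=(\omega_{1})^{\text{L}}\) it is \(\boldsymbol{\Pi}^{1}_{1}\) by \Cref{small spaces are consistently co-analytic}. As \(\boldsymbol{\Pi}^{1}_{1}\) is closed under finite unions, \(X\) itself is \(\boldsymbol{\Pi}^{1}_{1}\).

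The main obstacle is the lower-bound step: one must design the \(C_{\alpha}\) precisely enough that \Cref{special subsets of products} suppresses exactly the unwanted copies of \(2^{\omega}\), and then run the Baire-category argument that simultaneously pins the invariant extracted from \(C_{\alpha}\) down to a copy of \(N_{\alpha}\) and shows it is preserved by every homeomorphism of \(X^{2}\); getting a workable such invariant (the naive "union of Cantor closures of subsets of \(C_{\alpha}\)" only records \(N_{\alpha}\) up to the coarse operation \(2^{\omega}\times(-)\), which is not injective on homeomorphism types) is the delicate point. The remaining preliminaries — that the CDH \(\lambda\)-set of \Cref{lambda sets and CDH} may be taken crowded, dense in \(2^{\omega}\) and of size exactly \(\aleph_{1}\), and the bookkeeping ensuring each \(C_{\alpha}\) is dense in \(X^{2}\) — are routine but should be addressed.
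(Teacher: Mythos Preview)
Your choice of space \(X=2^{\omega}\oplus\Lambda\), the verification that \(X\) is CDH, the upper bound \(\mathfrak c\) on the number of types in \(X^{2}\), and the descriptive-complexity clause are all correct and coincide with the paper's argument (the paper writes \(X^{2}\approx Y^{2}\oplus 2^{\omega}\oplus(2^{\omega}\times Y)\), absorbing \((2^{\omega})^{2}\) into a single Cantor summand and merging the two mixed pieces into one copy of \(2^{\omega}\times Y\)).

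The genuine gap is precisely the one you flag in your last paragraph: your placement of \(N_{\alpha}\) inside a Cantor slice \(K_{0}\subset 2^{\omega}\times\Lambda\) does not yield a workable invariant, and the ``Baire-category analysis'' is not actually carried out. The paper resolves this with a different and sharper placement. After showing (as you do) that every \(h\in\mathcal H(X^{2})\) preserves the clopen piece \(2^{\omega}\times Y\), it works entirely there. Split the Cantor factor as \(2^{\omega}=A\cup\{s\}\cup B\) with \(A,B\) open and \(\partial A=\partial B=\{s\}\). On \(A\times Y\) place a countable dense \(Q_{0}\) from \Cref{special subsets of products} (no subset has Cantor closure); on \(B\times Y\) place a countable dense \(Q_{1}=\bigcup_{n}F_{n}\) chosen so that every basic open set contains some \(F_{n}\) with \(\overline{F_{n}}\approx 2^{\omega}\); and put the pairwise non-homeomorphic nowhere dense sets \(C_{r}\) from \Cref{Q has many nowhere dense subsets} on the boundary slice \(\{s\}\times Y\) (a copy of \(Y\), not of \(2^{\omega}\)). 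Set \(D_{r}=Q_{0}\cup Q_{1}\cup C_{r}\). If \(h\in\mathcal H(2^{\omega}\times Y)\) sends \(D_{p}\) to \(D_{r}\) and maps some point of \(\{s\}\times Y\) into \(A\times Y\), then a neighbourhood of that point meets \(B\times Y\), hence contains some \(F_{n}\), and \(h(F_{n})\subset Q_{0}\) contradicts the choice of \(Q_{0}\); symmetrically for \(B\times Y\). Thus \(h(\{s\}\times Y)=\{s\}\times Y\), forcing \(h(C_{p})=C_{r}\), which is impossible for \(p\neq r\).

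In short, the invariant you were searching for is simply the trace of \(D_{r}\) on the common boundary \(\{s\}\times Y\), and what pins it down is not a global ``union of Cantor closures'' but the deliberate asymmetry between \(Q_{0}\) (no Cantor closures at all) and \(Q_{1}\) (Cantor closures in every basic open set) across the two open sides of that boundary. Replacing your Cantor slice \(K_{0}\) by this \(Y\)-slice sitting on a topological frontier is the missing idea.
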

\begin{proof}
    By \Cref{lambda sets and CDH} we can find \(Y\), a \(\lambda\)-set of cardinality \(\aleph_1\) such that \(Y\) is CDH. Consider \(X = Y \oplus 2^\omega\). This space is clearly CDH, since both \(Y\) and \(2^\omega\) are CDH. By \Cref{small spaces are consistently co-analytic}, if MA + \(\neg\)CH + \(\omega_1 = \omega_1^{\text{L}}\) holds then \(Y\) is \(\boldsymbol{\Pi}_1^{1}\) thus also \(X\) is \(\boldsymbol{\Pi}_1^{1}\). Now, note that \(X^2 \approx Y^2 \oplus 2^\omega \oplus 2^\omega \times Y\). First we will show that for any \(h \in \mathcal{H}(X^2)\) we have \(h(2^\omega \times Y) =  2^\omega \times Y\).

    To this end let \(h \in \mathcal{H}(X^2)\). Since any clopen subset of \(2^\omega \times Y\) contains the Cantor set and \(Y^2\) does not contain the Cantor set, we have that \(h(2^\omega \times Y) \bigcap Y^2 = \emptyset\). By the same argument, we have \(h(2^\omega) \bigcap Y^2 = \emptyset\). Now suppose \(h(2^\omega)\bigcap (2^\omega \times Y) \neq \emptyset\). Denote by \(\pi_2: 2^\omega \times Y \to Y\) the projection to the second coordinate. Then we have that \(\pi_2(h(2^\omega)\bigcap (2^\omega \times Y))\) is compact, zero-dimensional metric space. It is also a clopen subset of \(Y\) therefore it is a crowded space. But this means that \(\pi_2(h(2^\omega)\bigcap 2^\omega \times Y) \approx 2^\omega\) which cannot be since \(Y\) does not contain the Cantor set. 
    This and the fact that \(h(2^\omega) \bigcap Y^2 = \emptyset\) implies that \(h(2^\omega) = 2^\omega\) and thus \(h(2^\omega \times Y) = 2^\omega \times Y\).
    Now we will show that \(Y \times 2^\omega\) has exactly \(\mathfrak{c}\) many types of countable dense sets, this together with the fact that it is preserved by any \(h \in \mathcal{H}(X^2)\) will yield the desired.

    Since \(|2^\omega \times Y| = \mathfrak{c}\) we have that the space \(2^\omega \times  Y\) has at most \(\mathfrak{c}\) many types of countable dense subsets. Thus, it is enough to find \(\mathfrak{c}\) many countable dense subsets of \(2^\omega \times Y\) of a different type.
    
    Let \(s \in 2^\omega\) and let \(A, B \subset 2^\omega\) be open disjoint such that \(A \cup B \cup \{s\} = 2^\omega \) and \(\partial A = \partial B = s\). Note that we have \(A \approx B \approx 2^\omega \setminus \{0\}\).

    By \Cref{special subsets of products}, we can find \(Q_0 \subset A \times Y\) countable dense such that for no \(E \subset Q_0\) we have \(\closure{E} \approx 2^\omega\). Let \(\{U_n\}_{n\in \omega}\) be a countable base of the space \(B \times Y\). For \(n \in \omega\) let \(F_n \subset U_n\) be countable such that \(\closure{F_n} \approx 2^\omega\). Let \(Q_1 = \bigcup_{n \in \omega}F_n\) and put \(D = Q_0 \cup Q_1\). Then the set \(D\) is by construction a countable dense subset of \(2^\omega \times Y\). By \Cref{Q has many nowhere dense subsets}, there is a collection \(\{C_r; r \in (0,1)\}\) of countable pairwise nonhomeomorphic nowhere dense subsets of \( \{s\} \times Y\). For \(r \in (0,1)\) let \(D_r = D \cup C_r \). 
    
    Now let \(p,r \in (0,1)\), \(p \neq r\) and suppose there is \(h \in \mathcal{H}(2^\omega \times Y)\) such that \(h(D_p) = D_r\). Let \(a \in  \{s\} \times Y \). First suppose \(h(a) \in A \times Y\), then we can find an open neighborhood \(V_0\) of \(a\) such that \(h(V_0) \subset A \times Y\). We have that \(V_0 \cap (B \times Y) \neq \emptyset\). There exists \(n \in \omega\) such that \(U_n \subset V_0\) thus also \(F_n \subset V_0\) which means \(h(F_n) \subset Q_0\). This contradicts the fact that for no countable subset \(E \subset Q_0\) we have \(\closure{E} \approx 2^\omega\). Now suppose \(h(a) \in B \times Y\). Then there is an open neighborhood \(V_1\) of \(a\) such that \(h(V_1) \subset B \times Y\), thus also \(h(V_1 \cap (A \times Y)) \subset B \times Y\). Again, we can find \(n \in \omega\) and \(F_n \subset h(V_1 \cap (A \times Y))\). However, this means \(h^{-1}(F_n) \subset Q_0\), which cannot be. This means that \
    \(h(\{s\} \times Y) = \{s\} \times Y\), thus \(h(C_p) = C_r\), which is a contradiction. Thus all the sets \(D_r\) for \(r \in (0,1)\) are of a different type.
\end{proof}

\begin{acknowledgement}
The author is very grateful to Benjamin Vejnar for his helpful comments and insights.
\end{acknowledgement}

\bibliographystyle{abbrv}
\bibliography{bibliography}

\include{bibliography.tex}

\end{document}